\newtheorem*{ther}{First Conservation law of elastic strips}
\newtheorem*{theo}{Second Conservation law of elastic strips}
\newtheorem*{1.theorem}{Theorem 1}
\newtheorem*{2.theorem}{Theorem 2}
\newtheorem*{3.theorem}{Theorem }
\newtheorem{definition}{Definition}
\newtheorem{lemma}{Lemma}
\newtheorem{remark}{Remark}
\newtheorem*{corollary}{Corollary}
\newtheorem{proposition}{Proposition}
\begin{document}

\title{ Elastic strips}
\author{ David Chubelaschwili and Ulrich Pinkall}
\maketitle

\begin{abstract}
    Motivated by the problem of finding an explicit description
    of a developable narrow Möbius strip of minimal bending energy, which was first formulated by M. Sadowsky in 1930,     we will develop the theory of elastic strips.
    Recently E.L. Starostin and G.H.M. van der Heijden found a numerical description for an elastic Möbius strip, but      did not give an integrable solution. 
    We derive two conservation laws, which describe the equilibrium equations of elastic strips.
    In applying these laws we find two new classes of integrable elastic strips which correspond to spherical elastic
    curves. We establish a connection between Hopf tori and force--free strips, which are defined by one of the            integrable strips, we have found. We introduce the P--functional and relate it to elastic strips.
\end{abstract}


\section{Introduction}
Sadowsky [6] showed that the bending energy~$E(F_{\gamma}) = \int_{M}H^2 dA $ of an infinitely narrow developable strip is proportional to 
\begin{align}
S(\gamma)=\int_{0}^{L}\kappa^2(1+\lambda^2 )^2 ds .
\end{align}
  We define elastic strips as critical points of (1), among all variations leaving the length fixed. E.L. Starostin and G.H.M. van der Heijden [7] generalized the variational problem of minimizing the energy of developable strips with finite width. They derived first integrals by using the variational bicomplex and 
obtained six balance equations for the components of the internal force $F$ and moment $M$ in the direction of the Frenet frame
\begin{align}
F'+\omega\times F=0,~~~~M'+\omega\times M+T\times F=0.
\end{align}
 By using a computer software they found a numerical description of an elastic Möbius strip, but they did not give explicit formulas and integrable solutions.
From the integrable geometric point of view it turned out to be more convenient to compute the internal force $b_0$ and torque $b_1$ in a fixed coordinate system, so that $b_0$ and $b_1$ become conservation fields along elastic strips themselves. We derive these conservation fields, which are all based on an low technology approach. The conservation laws enable us to find two new integrable systems of elastic strips. These elastic strips correspond to spherical elastic curves. Furthermore, we introduce the P-functional and show that the tangent image of the centerline of an elastic strip is a critical point of the P-functional, which enables us to reduce the variational problem to spherical geometry. In summery, in this paper we prove:

\begin{ther}
    A strip is elastic iff the force vector 
    \begin{align}
    b_0= a_1 T + a_2 N + a_3 B
    \end{align}
     is constant, with
        \begin{align}
           \begin{split}\label{koef}
                a_1  &:= \frac{1}{2} (\kappa^2(1+\lambda^2)^2+\mu)) \\ 
                a_2  &:= \kappa'(1 +\lambda^2 )^2+2\kappa(1 +\lambda^2 )\lambda\lambda'\\
                a_3  &:= -\left(\kappa^2(1+\lambda^2 )^2 \lambda + (\frac{\kappa'}{\kappa}(1+\lambda^2)2\lambda)'
                     +((1+\lambda^2 )2\lambda)''\right).
            \end{split}
        \end{align}
\end{ther}

\begin{theo}   For an elastic strip the torque vector
   \begin{align}
       b_1 = s_1 T+ s_2 N+s_3 B-\gamma\times b_0  
    \end{align}
    is constant, whereby
    \begin{align}\begin{split}\label{s_i}
     	s_1 &:= 2\kappa\lambda(1+\lambda^2) \\
     	s_2 &:= \frac{1}{\kappa}(2\kappa\lambda(1+\lambda^2))' \\
     	s_3 &:= \kappa(1+\lambda^2)(1-\lambda^2).\\
    \end{split}\end{align}
\end{theo}

By applying the conservation laws we find two classes of integrable systems, namely elastic momentum strips, which are defined by $ s_1'=0$, and force--free strips. We prove

\begin{1.theorem}
    For an elastic momentum strip the binormal $B$ of $\gamma$ is a spherical elastic curve. Conversely for each such arclength
    parametrized spherical curve $B\colon [0,\hat{L}]\rightarrow S^2$ with non-vanishing geodesic curvature $\lambda$ and
    $T:=B\times B'$ the space curve
    \begin{align}\label{21}
       \gamma(t)=\int_{0}^{t}(1+\frac{1}{\lambda^2(s)})T(s)ds
    \end{align}
    defines an elastic momentum strip.

\end{1.theorem}

\begin{2.theorem} 
    For a force--free strip the tangent vector $T$ of $\gamma$ is a spherical elastic curve with Lagrange multiplier 1.
    Conversely for each such spherical arclength parametrized curve $T\colon [0,\tilde{L}]\rightarrow S^2$ with geodesic curvature
    $\lambda$ the space curve
    \begin{align}\label{12}
        \gamma(t)=\int_{0}^{t}(1+\lambda^2(s))T(s)ds
    \end{align}
    defines a force--free strip. 
\end{2.theorem}
 We present two different methods to prove the last theorem. The first method uses the conservation laws, while the second method does not even require the calculus of variation, but only an elegant argument. The second argument can be generalized and reduces the problem of finding the centerline $\gamma$ of an elastic strip to it's tangent image.

\section{Elastic strips}

	Let $ \gamma\colon[0,L]\rightarrow \mathbb{R}^{3} $ be regular Frenet curve with velocity $v = \left|\gamma'\right|$.
	Denote
	
	\begin{align}
	    \kappa & = \frac{\left|\gamma'\times\gamma''\right|}{\left|\gamma'\right|^3} \\
	    \tau &= \frac{det(\gamma',\gamma'',\gamma''')}{\left| \gamma' \times \gamma'' \right|^2}\\
	    \lambda & =\frac{\tau}{\kappa} 
	\end{align}
	the curvature, torsion and modified torsion of $ \gamma $.
	\begin{center}
\begin{align}\begin{split}\label{frenet}
{\ \quad \ } \gamma' &=  v T\\
{\ \quad \ } T' &=							\qquad  v \kappa N						\\
{\ \quad \ }N' &= - v \kappa T	 				\qquad	 + v \lambda \kappa B\\
{\ \quad \ } B' &= 						\qquad -v \lambda \kappa N. 
\end{split} \end{align}

\end{center}

We investigate ruled surfaces described by
\begin{align}
 F_{\gamma}:[-\epsilon,\epsilon] \times [0,L]\mapsto\mathbb{R}^3,~~ F_{\gamma}(t,u)= \gamma(t) + u D(t),
 \end{align}
   where $ D(t) = \lambda(t)T(t) + B(t)$~denotes the modified Darboux vector.
 One can show that this surface is developable and~$\gamma $~is a pregeodesic of the surface. We call these surfaces
 rectifying strips. We investigate rectifying infinitely narrow strips which are critical points of the Willmore-functional $E(F_{\gamma}) = \int_{M}H^2 dA $ among  all space curves with fixed end points and $\dot{L}:=\left.\frac{\partial}{\partial t}\right|_{t=0}L(\gamma_t) =0$. Wunderlich [8] showed that the limit $\epsilon\rightarrow 0$ $\int_{M}H^2 dA$ is proportional
 to the Sadowsky functional~$
   S(\gamma)=\int_{0}^{L}\kappa^2(1+\lambda^2 )^2 ds $. 
   This gives rise to the following 
   \begin{definition}
   A strip $F_{\gamma}$ is elastic, if $\gamma$ is a critical point of the modified Sadowsky functional
 \begin{align}\label{sadow}
  S_\mu(\gamma)=\int_{0}^{L}(\kappa^2(1+\lambda^2 )^2-\mu) v dt,
 \end{align}
 where $\mu$ is a Lagrange multiplier, standing for the length constraint.
A Frenet curve $ \gamma\colon(0,L)\rightarrow \mathbb{R}^{3} $ defines an elastic strip, if $\gamma$ defines an elastic strip on each closed subinterval of $\left(0,L\right)$.
 \end{definition}
\begin{remark}
 A helpful observation is that $\tilde{\gamma}(s):=\tilde{\mu}\gamma(s)$ is a critical point of $S_\frac{\mu}{\tilde{\mu}^2}$ iff $\gamma$ is a critical point of $S_\mu.$ In fact, since $\tilde{\kappa}=\frac{\kappa}{\tilde\mu}$,
 $\tilde{\lambda}=\lambda,$ $\tilde{v}=\tilde{\mu}v$ and the scaling of $\gamma$ is compatible with our boundary condition, we obtain $\frac{1}{\tilde{\mu}}S_\mu=S_\frac{\mu}{\tilde{\mu}^2}$. By scaling the curve one can always achieve $\mu=-1$ or $\mu=1$.
\end{remark} 

\section{Conservation laws of elastic strips}

First we start with a technical variational
\begin{lemma}
    Let ~$ \gamma_0 : [0,L]\mapsto \mathbb{R}^3$ be an arclength parametrized Frenet curve
   and $ \gamma: [-\epsilon,\epsilon] \times [0,L] \mapsto \mathbb{R}^3$ a variation of~$\gamma_0$~with variational field
    $\dot{\gamma}(s):=\left.\frac{\partial}{\partial t}\right|_{t=0}\gamma_{t}(s)= u_1(s)T(s)+u_2(s)N(s)+u_3(s)B(s)$. Then
    \begin{align}
        \begin{split}\label{variationsformel}
    	    \dot{v} &= u_1'-\kappa u_2\\
            \dot{\kappa}&= u_1 \kappa' + u_2 (\kappa^2(1-\lambda^2)) - 2 u_3' \lambda \kappa - u_3(\lambda \kappa)'+
             u_2''\\ 
            \dot{\lambda} &= u_1\lambda'+u_2(\frac{(\lambda\kappa)''}{\kappa^2}-
            \frac{(\lambda\kappa)'\kappa'}{\kappa^3}+\lambda^3\kappa+\lambda\kappa) 
            +u_2' (2\frac{\lambda'}{\kappa}+\frac{(\lambda\kappa)'}{\kappa^2})\\
            &\quad \, +u_2''\frac{\lambda}{\kappa}-u_3\lambda\lambda'+u_3'(1+\lambda^2)-u_3''\frac{\kappa'}{\kappa^3}+
            u_3'''\frac{1}{\kappa^2}.
        \end{split}
    \end{align}
\end{lemma}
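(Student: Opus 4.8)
The plan is to compute the first variation of $v$, $\kappa$ and $\lambda$ directly from their definitions by differentiating under the variation parameter $t$ at $t=0$. Throughout I use arclength parametrization of $\gamma_0$, so that $v(s,0)=1$, and I abbreviate $\dot{X}=\left.\frac{\partial}{\partial t}\right|_{t=0}X$. The key preliminary computation is the variation of the Frenet frame itself: writing $\dot{\gamma}=u_1T+u_2N+u_3B$ and using the Frenet equations \eqref{frenet}, one finds $\dot{v}=\langle\dot{\gamma}',T\rangle=u_1'-\kappa u_2$ immediately, since $\dot{\gamma}'=u_1'T+u_1\kappa N+u_2'N-u_2\kappa T+u_2\lambda\kappa B+u_3'B-u_3\lambda\kappa N$. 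From $\dot{T}=\left.\frac{\partial}{\partial t}\right|_{t=0}\frac{\gamma'}{v}=\dot{\gamma}'-\dot{v}T$ one reads off the components of $\dot{T}$ in the Frenet frame; then $\dot{N}$ and $\dot{B}$ follow from orthonormality ($\langle\dot{T},N\rangle=-\langle T,\dot{N}\rangle$, etc.) together with the requirement that $\dot{N},\dot{B}$ have the correct tangential parts, which are themselves determined by differentiating $N'=-\kappa T+\lambda\kappa B$ and $B'=-\lambda\kappa N$.

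Next I would obtain $\dot\kappa$. The cleanest route is to use $\kappa N = \frac{1}{v}\bigl(\frac{\gamma'}{v}\bigr)' = \frac{1}{v}T'$; differentiating in $t$ at $t=0$ gives $\dot\kappa\, N + \kappa\,\dot N = \left.\frac{\partial}{\partial t}\right|_{t=0}\frac{T'}{v}$, and taking the $N$-component isolates $\dot\kappa$. Expanding the right-hand side requires $\dot{T}'=(\dot{T})'$ and the previously computed $\dot v$, $\dot T$, $\dot N$; collecting terms in $u_1,u_2,u_3$ and their derivatives yields the stated formula $\dot\kappa = u_1\kappa' + u_2\kappa^2(1-\lambda^2) - 2u_3'\lambda\kappa - u_3(\lambda\kappa)' + u_2''$. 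This is a standard but somewhat lengthy bookkeeping exercise; the appearance of the second derivative $u_2''$ is the signature of curvature being a second-order quantity.

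Finally, for $\dot\lambda$, since $\lambda = \tau/\kappa$ and $\tau$ involves a third derivative of $\gamma$, I expect this to be the main obstacle: it is the longest computation and the one most prone to sign and product-rule errors. Rather than varying $\tau$ and $\kappa$ separately, I would use the identity $\lambda\kappa\, B = -\frac{1}{v}N'$ (from the Frenet equations, so that $\lambda\kappa = -\frac1v\langle N',B\rangle$), or equivalently extract $\lambda$ from the already-derived frame variations via $\lambda\kappa = \langle \frac1v N', B\rangle$. Differentiating $\langle \frac{1}{v}N', B\rangle$ at $t=0$ and using $\dot N$, $\dot B$, $\dot v$ gives $(\lambda\kappa)\dot{} = \dot\lambda\,\kappa + \lambda\dot\kappa$; solving for $\dot\lambda$ and substituting the formula for $\dot\kappa$ already found, then simplifying, produces the claimed expression. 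The term $u_3'''\frac{1}{\kappa^2}$ confirms that $\lambda$ is effectively a third-order quantity, so third derivatives of the variational field are unavoidable. The whole lemma is purely computational: no conceptual difficulty beyond organizing the algebra, keeping careful track of which quantities still carry $v$-factors (set to $1$ only after differentiation), and repeatedly applying the Frenet equations \eqref{frenet} to reduce all derivatives of $T,N,B$.
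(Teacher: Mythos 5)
Your plan is essentially the paper's own proof: the paper first gets $\dot v$ and $\dot T$ from $(\dot\gamma)'=(\gamma')^\cdot$, then $\dot\kappa$ and $\dot N$ by comparing Frenet-frame components in $(\dot T)'=(T')^\cdot$, and finally $\dot\lambda$ from the $B$-component of $(\dot N)'=(N')^\cdot$ — exactly the bookkeeping you outline (and, as in the paper, $\dot B$ never needs to be computed explicitly, since $\langle\dot B,B\rangle=0$ kills its contribution in the $B$-component). Just fix the sign slip in your auxiliary identity: the Frenet equations give $\lambda\kappa=+\tfrac1v\langle N',B\rangle$ (your second version), not $\lambda\kappa\,B=-\tfrac1v N'$; with that corrected, your argument is the one in the paper.
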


\begin{proof} Since $(\dot{\gamma})' = (\gamma')^\cdot$ we have
    \begin{align*}
        \dot{v} T + \dot{T} = (u_1'-u_2\kappa)T+(u_1\kappa +u_2'-\lambda \kappa u_3)N+(u_3'+\lambda\kappa u_2)B.
    \end{align*}
    Hence
    \begin{align*}
        \dot{v} & = u_1'- u_2\kappa\\
        \dot{T} & =( u_1\kappa +u_2'- \lambda\kappa u_3)N+(\lambda\kappa u_2+ u_3')B. 
    \end{align*} 
Furthermore $(T')^\cdot = (\dot{T})'$ yields
\begin{align*}
    \dot{v}\kappa N+\dot{\kappa} N+\kappa\dot{N} =
    &-(u_1\kappa+u_2'-\lambda\kappa u_3)\kappa T  \\
    & + \left((u_2'+ u_1\kappa-\lambda\kappa u_3)'-(\lambda\kappa u_2 +  u_3')\lambda\kappa\right)N\\
     &+\left((\lambda\kappa u_2+ u_3')'+\lambda\kappa(u_1\kappa + u_2'-\lambda\kappa u_3)\right)B.
\end{align*}
By comparing the coefficients we obtain
  \begin{align*}\begin{split}
      \dot{\kappa}&= u_2''+ u_1'\kappa + u_1\kappa'-(\lambda\kappa)' u_3-\lambda\kappa u_3'-u_3'\lambda\kappa
      -(\lambda\kappa)^2u_2-u_1'\kappa
       +u_2\kappa^2\\
       &=u_1\kappa'+(1-\lambda^2)\kappa^2 u_2 +u_2''-(\lambda\kappa)'u_3-2\lambda\kappa u_3'\\
       \dot{N}&=-(u_2'+u_1\kappa-u_3\lambda\kappa)T+\frac{1}{\kappa}
        \left((u_2'+ u_1\kappa-\lambda\kappa u_3)\lambda\kappa + (u_3'+\lambda\kappa\ u_2)'\right)B.
   \end{split}\end{align*}
Finally $(\dot{N})'=(N')^\cdot $ gives 
  \begin{align*}
    \begin{split}
      (N')^\cdot &=(-v\kappa T+\lambda \kappa v \,B)^\cdot \\
       &=(-v\kappa)^\cdot T-\kappa\dot{T}+(\dot{\lambda}\kappa+\lambda\dot{\kappa}+\lambda\kappa\dot{v})B
          +\lambda\kappa\dot{B}~\\
         \Rightarrow\; \left\langle (N')^\cdot ,B\right\rangle&=-\kappa(\lambda\kappa u_2+ u_3')+\dot{\lambda}\kappa
          +\lambda\dot{\kappa}+\lambda\kappa\dot{v}~,\\
        \left\langle (\dot{N})',B\right\rangle&=(\frac{1}{\kappa}\left(( u_1\kappa+ u_2'-\lambda\kappa u_3)
          \lambda\kappa+(\lambda\kappa u_2+ u_3')\right))'\\
         \Rightarrow\;\dot{\lambda}\kappa&=(\frac{1}{\kappa}\left((u_1\kappa+ u_2'-\lambda\kappa u_3)\lambda\kappa+(\lambda\kappa u_2+ u_3')'\right))' \\&\quad -\lambda\dot{\kappa}-\lambda\kappa\dot{v}+\lambda\kappa^2 u_2+\kappa u_3'\\
 &=u_1(\kappa\lambda)'+u_1'\kappa\lambda+u_2(\frac{1}{\kappa}(\lambda\kappa)''-\frac{(\lambda\kappa)'\kappa'}{\kappa^2}-(        1-\lambda^2)\lambda\kappa ^2+2\lambda\kappa^2 )\\
        & \quad +u_2'(2\lambda'+ \frac{(\lambda\kappa)'}{\kappa}) +u_2''\lambda-(\lambda^2\kappa)'u_3-\lambda^2\kappa u_3'
+\frac{1}{\kappa}u_3'''-\frac{\kappa '}{\kappa^2}u_3''\\& \quad-\lambda u_1 \kappa' +\lambda(\lambda\kappa)'u_3+2\lambda^2\kappa u_3'-\lambda\kappa u_1'+\kappa u_3'.\\
\end{split}\end{align*}
      \begin{align*}\begin{split}
          \dot{\lambda}&=u_1\frac{1}{\kappa}((\kappa\lambda)'-\lambda\kappa')+u_1'(\lambda\kappa-\lambda\kappa)
          \\& \quad+u_2\left(\frac{(\lambda\kappa)''}{\kappa^2}-
           \frac{(\lambda\kappa)'\kappa'}{\kappa^3}-(1-\lambda^2)\lambda\kappa+2\lambda\kappa\right) 
            +u_2'(2\frac{\lambda'\kappa}{\kappa^2}+\frac{(\lambda\kappa)'}{\kappa^2})\\& \quad + u_2''\frac{\lambda}{\kappa}            +u_3(-\frac{(\lambda^2\kappa)'}{\kappa} +\frac{\lambda}{\kappa}(\lambda\kappa)')+
           u_3'(1+\lambda^2)+u_3''(-\frac{\kappa'}{\kappa^3})+\frac{1}{\kappa^2}u_3'''\\
 &=u_1\lambda'+u_2\left(\frac{(\lambda\kappa)''}{\kappa^2}-\frac{(\lambda\kappa)'\kappa'}{\kappa^3}+\lambda^3\kappa+\lambda\kappa\right)\\
          & \quad+u_2'\left(2\frac{\lambda'}{\kappa}+\frac{(\lambda\kappa)'}{\kappa^2}\right)
          +u_2''\frac{\lambda}{\kappa}
          -u_3\lambda\lambda'
            +u_3'(1+\lambda^2)-u_3''\frac{\kappa'}{\kappa^3}+u_3'''\frac{1}{\kappa^2}.
      \end{split}\end{align*}
\end{proof}
  In the following, we  do not distinguish between the symbols ~$\gamma_0$~and the variation~$\gamma$ while computing the first variation formula for the integrand of the modified Sadowsky functional.
\begin{lemma}\mbox{}
    Let $\gamma$$\colon$$[0,L]\mapsto \mathbb{R}^3$~be an arclength parametrized curve that defines an elastic strip.     Consider a variation~$\gamma$~with variational field\\ $\dot{\gamma}= u_1T+u_2N+u_3B$, then
      \begin{equation}\label{2}
       \frac{1}{2} \left.\frac{\partial}{\partial t}\right|_{t=0} (\kappa_t^2 (1+\lambda_t^2)^2-\mu) v_t=u_2 f_1 + u_3        f_2+ b'
      \end{equation}
      with
        \begin{align}
        \begin{split}\label{3}
         f_1&:= (\kappa'(1 +\lambda^2 )^2+2\kappa(1 +\lambda^2 )\lambda\lambda')'+ \frac{\kappa}{2} (\kappa^2(1+\lambda^2 )^2+\mu) \\ 
      &\quad +\lambda \kappa \left( \kappa^2(1 +\lambda^2 )^2  \lambda+(\frac{\kappa'}{\kappa}(1 +\lambda^2 )2\lambda)'+(1 +\lambda^2 ) 2  \lambda)'' \right)\\
   \end{split} \end{align}
 
    \begin{align}
   \begin{split}\label{4}
   f_2 &:=-\left(\kappa^2(1+\lambda^2 )^2 \lambda + (\frac{\kappa'}{\kappa}(1+\lambda^2)2\lambda)'+((1+\lambda^2 )2\lambda)''\right)'\\
     	 &~~~+\kappa\lambda\left(\kappa'(1+\lambda^2 )^2+2\kappa(1 +\lambda^2 )\lambda\lambda'\right)\\
   \end{split} \end{align}

 \begin{align}
 \begin{split}\label{5}
  b	&:= u_1(\frac{1}{2}(\kappa^2(1+\lambda^2)^2-\mu)\\
   							& \quad+ u_2 \left( (6 \lambda \lambda' \kappa +2\lambda^2 \kappa')(1+\lambda^2) - \left(\kappa(3\lambda^2+1)(1+\lambda^2)\right)' \right)\\ 
   							& \quad+ u_2' (\kappa (3 \lambda^2+1)(1+\lambda^2)) \\
   							& \quad+ u_3\left( (2 \lambda \frac{\kappa'}{\kappa}(1+\lambda^2))' + (2 \lambda (1+\lambda^2))'' \right) \\
   							& \quad- u_3' (2 \lambda\frac{\kappa'}{\kappa}(1+\lambda^2) + (2\lambda(1+\lambda^2))')+u_3'' \left( 2(1 + \lambda^2) \lambda \right). 
 \end{split} \end{align}   
\end{lemma}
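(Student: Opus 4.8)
The plan is to treat this as a direct computation built on Lemma~1, followed by repeated ``integration by parts'' at the level of integrands (peeling off exact $s$-derivatives); note that the hypothesis ``$\gamma$ defines an elastic strip'' is not really used here beyond regularity ($\kappa>0$), and the arclength normalization gives $v=1$ at $t=0$. By the Leibniz and chain rules,
\[
\tfrac{1}{2}\left.\tfrac{\partial}{\partial t}\right|_{t=0}(\kappa_t^2(1+\lambda_t^2)^2-\mu)v_t
=\kappa(1+\lambda^2)^2\dot\kappa+2\kappa^2\lambda(1+\lambda^2)\dot\lambda+\tfrac{1}{2}(\kappa^2(1+\lambda^2)^2-\mu)\dot v .
\]
Into this I substitute the three formulas for $\dot v,\dot\kappa,\dot\lambda$ from Lemma~1. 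The result is linear in $u_1,u_2,u_3$ and their derivatives, with $u_2$ appearing up to second order and $u_3$ up to third order (the latter because $\dot\lambda$ carries the term $u_3'''/\kappa^2$).

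Next I sort the result by which $u_i$ multiplies each term. The $u_1$-part is $\kappa\kappa'(1+\lambda^2)^2u_1+2\kappa^2\lambda\lambda'(1+\lambda^2)u_1+\tfrac12(\kappa^2(1+\lambda^2)^2-\mu)u_1'$; since the first two terms equal $\tfrac12\big(\kappa^2(1+\lambda^2)^2\big)'u_1$ and $\mu$ is constant, they collapse into $\tfrac12\big((\kappa^2(1+\lambda^2)^2-\mu)u_1\big)'$, which is exactly the $u_1$-contribution to $b'$. A useful internal check at this stage: the two $u_3'$-terms carrying a factor $\kappa^2$ --- namely $-2\lambda\kappa^2(1+\lambda^2)^2u_3'$ from $\kappa(1+\lambda^2)^2\dot\kappa$ and $+2\kappa^2\lambda(1+\lambda^2)^2u_3'$ from $2\kappa^2\lambda(1+\lambda^2)\dot\lambda$ --- cancel identically, so no $u_3'$ with a $\kappa^2$ weight survives.

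For the $u_2$- and $u_3$-parts I repeatedly apply the identity $wu^{(k)}=\big(\sum_{j=0}^{k-1}(-1)^jw^{(j)}u^{(k-1-j)}\big)'+(-1)^kw^{(k)}u$ to transfer every derivative off $u_2$ and off $u_3$. Each application deposits an exact derivative, which accumulates into $b$, and modifies the coefficient of the undifferentiated $u_2$, resp. $u_3$; after collecting everything the coefficient of $u_2$ must be $f_1$ of \eqref{3} and that of $u_3$ must be $f_2$ of \eqref{4}, while the accumulated exact-derivative terms must be the five lines of \eqref{5}. Writing the intermediate expressions in terms of the recurring block $\tfrac{\kappa'}{\kappa}(1+\lambda^2)2\lambda$, and noting that $\kappa(3\lambda^2+1)(1+\lambda^2)=\kappa(1+\lambda^2)^2+2\kappa\lambda^2(1+\lambda^2)$ is precisely the combined $u_2''$-coefficient (so it reappears as the $u_2'$-coefficient in $b$ after one integration by parts), keeps the manipulation organized.

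The main obstacle is simply the length and delicacy of this bookkeeping: the $u_3$-terms contain, through the $u_3'''$ entry of $\dot\lambda$, an effective double derivative of $\lambda$, so after multiplying by $2\kappa^2\lambda(1+\lambda^2)$ and integrating by parts three times one must verify a cascade of cancellations --- the $\kappa^2$-weighted terms, and the cross terms between the $\dot\kappa$- and $\dot\lambda$-contributions --- in order for the output to collapse into the stated closed forms rather than into something messier. No conceptual difficulty is expected beyond the careful organization of these integrations by parts and a final comparison of coefficients against \eqref{3}, \eqref{4}, \eqref{5}.
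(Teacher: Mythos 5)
Your proposal is correct and follows essentially the same route as the paper: expand $\tfrac12\partial_t|_{t=0}(\kappa_t^2(1+\lambda_t^2)^2-\mu)v_t$ by the product rule, substitute $\dot v,\dot\kappa,\dot\lambda$ from Lemma 1, and regroup so that all differentiated $u_2,u_3$ terms are absorbed into an exact derivative $b'$, leaving $u_2f_1+u_3f_2$; your intermediate checks (the $u_1$-terms collapsing to $\tfrac12\big((\kappa^2(1+\lambda^2)^2-\mu)u_1\big)'$, the cancellation of the $\kappa^2$-weighted $u_3'$ terms, and the $u_2''$-coefficient $\kappa(1+3\lambda^2)(1+\lambda^2)$) all agree with the coefficients appearing in the paper's computation.
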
 
 \begin{proof}
\begin{align*}
   	&  \frac{1}{2} \left.\frac{\partial}{\partial t}\right|_{t=0} (\kappa_t^2 (1+\lambda_t^2)^2-\mu) v_t\\ 
   	&= \frac{1}{2} (\kappa^2(1+\lambda^2)^2-\mu)  \dot{v} + {\dot\kappa} \kappa (1+\lambda^2)^2 
   		 	 + 2 \kappa ^2(1+\lambda^2) \lambda {\dot\lambda} \\ 
	  & \stackrel{(\ref{variationsformel})} {=} u_1 \left( \kappa' \kappa \left(1 +\lambda^2\right)^2 + 2  \kappa^2 \lambda \lambda' (1 + \lambda^2)  \right)
		 + u_1'\left(\frac{1}{2} \left( \kappa^2(1 + \lambda^2)^2 - \mu \right) \right)\\
		& \quad + u_2 \left( \frac {1}{2} \kappa^3 (1 + \lambda^2)^2 (1 + 2\lambda^2) + \frac{1}{2}\mu\kappa + 2(1+ \lambda^2)\lambda
				\left( (\lambda\kappa)'' - (\lambda\kappa)'\frac{\kappa'}{\kappa} \right)\right)\\
		& \quad	+ u_2'\left(2(1+\lambda^2)\lambda\left( 2\lambda'\kappa + (\lambda\kappa)' \right) \right)
		 + u_2''\left(\kappa (1 + \lambda^2)^2 + 2 \kappa(1 + \lambda^2)\lambda^2 \right)\\
		& \quad + u_3 \left( -\kappa(1+\lambda^2)^2 (\lambda\kappa)' - 2 \kappa^2(1 + \lambda^2) \lambda^2 \lambda' \right)\\
		& \quad + u_3'\left( -2\kappa^2(1 + \lambda^2)^2 \lambda + 2 \kappa^2(1+ \lambda^2)^2 \lambda \right)\\
		& \quad + u_3'' \left( -2 (1 + \lambda^2) \lambda \frac{\kappa'}{\kappa} \right)
		  + u_3''' \left( 2(1 + \lambda^2) \lambda \right)\\
		&= + u_2\left((\kappa'(1 +\lambda^2 )^2+2\kappa(1 +\lambda^2 )\lambda\lambda')' +\frac{\kappa}{2}(\kappa^2(1 +\lambda^2 )^2+\mu)\right.\\  
                  & \quad \left.+\lambda \kappa \left( \kappa^2(1 +\lambda^2 )^2  \lambda 
								 + (\frac{\kappa'}{\kappa}(1 +\lambda^2 )2\lambda)'+ (1 +\lambda^2 ) 2  \lambda)'' \right)\right)\\
								 & \quad + u_3\left(-\left(\kappa^2(1+\lambda^2 )^2 \lambda + (\frac{\kappa'}{\kappa}(1+\lambda^2)2\lambda)'+((1+\lambda^2 )2\lambda)'' \right)'\right.\\
     	        & \left. \quad +\kappa \lambda\left( \kappa'(1 +\lambda^2 )^2+2\kappa(1 +\lambda^2 )\lambda\lambda'\right) \right)\\
     	        & \quad + \left(u_1(\frac{1}{2}(\kappa^2(1+\lambda^2)^2-\mu)\right.\\
   							& \quad \left.+ u_2 \left( (6 \lambda \lambda' \kappa +2\lambda^2 \kappa')(1+\lambda^2) - \left(\kappa(3\lambda^2+1)(1+\lambda^2)\right)' \right)\right.\\ 
   							& \left.\quad+ u_2' (\kappa (3 \lambda^2+1)(1+\lambda^2))\right. \\
   							& \left.\quad+ u_3\left( (2 \lambda \frac{\kappa'}{\kappa}(1+\lambda^2))' + (2 \lambda (1+\lambda^2))'' \right)\right. \\
   							& \left.\quad- u_3' (2 \lambda\frac{\kappa'}{\kappa}(1+\lambda^2) + (2\lambda(1+\lambda^2))')+ u_3'' \left( 2(1 + \lambda^2) \lambda \right)\right)'. 
          \end{align*}
Comparing the expression above with (\ref{3}), (\ref{4}) and (\ref{5}) we obtain (\ref{2}) as desired.
 \end{proof}

   \begin{proposition}\mbox{}
 \begin{enumerate}
 \item
 The critical points of $S_\mu$ are characterized by the Euler-Lagrange equations $f_1=f_2=0$.\\
 \item
 If $\gamma$ is a critical point of~$S_\mu$, then for each variation of~$\gamma$,~which leaves the integrand of the Sadowsky functional $(\kappa_t^2(1+\lambda_t^2)^2-\mu)v_t$ invariant, one obtains $b'=0$.
\end{enumerate}

   	\end{proposition}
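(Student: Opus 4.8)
The plan is to obtain both assertions as essentially immediate consequences of the first variation identity established in the previous lemma,
\[
\frac{1}{2}\,\frac{\partial}{\partial t}\Big|_{t=0}(\kappa_t^2(1+\lambda_t^2)^2-\mu)\,v_t \;=\; u_2 f_1 + u_3 f_2 + b',
\]
which already performs the entire computation. For part (1) I would fix a compact subinterval $[a,b]\subset(0,L)$ — recall that by definition $\gamma$ defines an elastic strip precisely when it is critical on every such subinterval — and integrate the identity over $[a,b]$. If the variational field $u_1T+u_2N+u_3B$, together with those derivatives of $u_2$ and $u_3$ that occur in $b$, is supported in the interior $(a,b)$, then $\int_a^b b'\,ds = 0$ and the first variation of $S_\mu$ reduces to $\int_a^b (u_2 f_1 + u_3 f_2)\,ds$; note that $u_1$ drops out entirely, reflecting the reparametrization invariance of the functional.

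I would then apply the fundamental lemma of the calculus of variations. Since $\gamma$ is a Frenet curve and the condition $\kappa>0$ is open, on $[a,b]$ every smooth pair $(u_2,u_3)$ with compact support in $(a,b)$ is the normal part of an admissible variation (taking $u_1=0$, say), and $u_2$, $u_3$ may be prescribed independently; hence vanishing of the first variation for all such variations forces $f_1\equiv 0$ and $f_2\equiv 0$ on $(a,b)$, and therefore on all of $(0,L)$. The converse — that $f_1=f_2=0$ implies criticality — is read off the same identity, the total-derivative term again being killed by working with compactly supported variations. For part (2), once $\gamma$ is critical we substitute $f_1=f_2=0$ into the lemma, so that $\frac{1}{2}\,\frac{\partial}{\partial t}\big|_{t=0}(\kappa_t^2(1+\lambda_t^2)^2-\mu)v_t = b'$; for any variation leaving this integrand pointwise invariant the left-hand side vanishes identically, and we conclude $b'=0$ along $\gamma$.

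I do not anticipate a serious difficulty here. The two points needing a moment's care are checking that all the boundary contributions coming from $b$ (which involves $u_2,u_2',u_3,u_3',u_3''$) indeed vanish for variational fields supported in the open subinterval, so that the Euler--Lagrange extraction is genuinely clean, and confirming that there is no hidden admissibility constraint on the normal variational field — which holds because on a compact subinterval the curvature is bounded away from zero and the perturbations are small, so nearby curves remain Frenet curves. Beyond that, everything follows directly from the preceding lemma.
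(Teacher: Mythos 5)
Your proposal is correct and follows essentially the same route as the paper: both parts rest on the first variation identity $\tfrac12\left.\tfrac{\partial}{\partial t}\right|_{t=0}(\kappa_t^2(1+\lambda_t^2)^2-\mu)v_t=u_2f_1+u_3f_2+b'$ from the preceding lemma, with part (1) obtained by killing the boundary term $b$ and invoking the fundamental lemma of the calculus of variations, and part (2) by substituting $f_1=f_2=0$ and using pointwise invariance of the integrand. The only cosmetic difference is that you work with compactly supported variations on subintervals (which cleanly annihilates all derivative terms in $b$), whereas the paper uses proper variations on $[0,L]$ and simply asserts $b(0)=b(L)=0$; this is a slightly more careful implementation of the same argument, not a different one.
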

   	
\begin{proof}\mbox{}
  \begin{enumerate}
\item Let $\gamma$ be a critical point of~$S_\mu$. Since the Sadowsky functional is invariant under reparameterizations, we can assume $\gamma$ to be arclength parametrized. From (\ref{2}) we obtain for each proper variation of $\gamma$  
 \begin{align*}
  0=\left.\frac{\partial}{\partial t}\right|_{t=0}S_\mu(\gamma_t)&=\int_{0}^{L}(u_2(s) f_1(s) + u_3(s) f_2(s)+ b'(s))ds\\
                                                                 &=\int_{0}^{L}(u_2(s) f_1(s) + u_3(s) f_2(s)+)ds+ b(L)-b(0).\\
 \end{align*}
   Using the fact that~${b}(L)={b}(0)=0$~ for a proper variation, we obtain the desired Euler-Lagrange equations~$f_1=f_2=0$. 
 \item
   		
   			The invariance of $(\kappa_t^2(1+\lambda_t^2)^2-\mu) v_t$ with respect to $t$ implies:
   			
   	\begin{eqnarray*}
   		0	&=& \frac{1}{2} \left.\frac{\partial}{\partial t}\right|_{t=0} (\kappa_t^2(s) (1+\lambda_t^2(s))^2-\mu) v_t(s)\\ 
   	 &  \stackrel{(\ref{2})}{=}& u_2(s) f_1(s) + u_3(s) f_2(s)+ b'(s) \\
   		 	&= & b'(s).
   	\end{eqnarray*}
   			   		
 \end{enumerate}
  			
\end{proof}
  Recently Th. Hangan [\ref{h1}] derived the  two Euler-Lagrange equations of (1), while his second equation coincides with     ours, it seems unlikely that his first equation is equivalent to $f_1$.\\
  It is apparent from the Euler-Lagrange equations that planar critical points of the modified Sadowsky functional      are just planar elastic curves. Furthermore helices solve the Euler-Lagrange equations.
 To obtain more solutions we use the symmetries of the Sadowsky functional in the spirit of the Noether theorem. 
   Obviously the Euclidian group leaves $(\kappa^2(1+\lambda^2)^2-\mu) v$ invariant.
   This transformation group is generated by translations and rotations. For a variation consisting only of
   translations we obtain:
   
   \begin{align*}\begin{split}
   		\gamma_{t}(s)											&= \gamma(s)+ t a \qquad \mbox{for an arbitrary } a\in \mathbb{R}^3\\
   		\Rightarrow\qquad \dot{\gamma}	&= a=\underbrace{\left\langle a,T\right\rangle}_{u_1}T + \underbrace{\left\langle a,N\right\rangle}_{u_2} N + \underbrace{\left\langle a,B\right\rangle}_{u_3}B.
   \end{split}\end{align*}
  
   Computing $u_2',u_3',u_3''$ one gets 
   \begin{align*}\begin{split}
   	u_2' &= -\kappa\left\langle a,T\right\rangle + \lambda   \kappa \left\langle a,B\right\rangle\\
   	u_3' &= -\lambda \kappa \left\langle a,N\right\rangle\\
   	u_3''&= -(\lambda \kappa)' \left\langle a,N\right\rangle + \lambda \kappa^2\left\langle a,T\right\rangle-\lambda^2\kappa^2\left\langle a,B\right\rangle.
   \end{split}\end{align*}
   
   From (\ref{5}) we obtain that
    \begin{align}\begin{split}
    	b &= \left\langle  a, \frac{1}{2}(\kappa^2(1+\lambda^2)^2+\mu)T + (\kappa'(1+\lambda^2)^2+ 2 \kappa \lambda' \lambda(1+\lambda^2))  N \right.\\
    					& \quad \left. - \left( \kappa^2 (1+\lambda^2)^2 \lambda + \left(2\frac{\kappa'}{\kappa}\lambda(1+\lambda^2)\right)'+ (2 \lambda (1+ \lambda^2) )''\right)  B\right\rangle
    \end{split}\end{align}
   
   is constant for any $a$ $\in$ $\mathbb{R}^3$. This however implies that
   
   	\begin{align*}\begin{split}
   	b_0 &:=\frac{1}{2}(\kappa^2(1+\lambda^2)^2+\mu)T + (\kappa'(1+\lambda^2)^2+ 2 \kappa \lambda' \lambda(1+\lambda^2))  N  \\
    		& \quad -\left( \kappa^2 (1+\lambda^2)^2 \lambda + \left(2\frac{\kappa'}{\kappa}\lambda(1+\lambda^2)\right)'+ (2 \lambda (1+ \lambda^2) )''\right) B
   \end{split}\end{align*}
   is constant.\\
   \\~
      By taking into account that 
   $\left.\frac{\partial}{\partial t}\right|_0 A_t \gamma(s) = w \times \gamma(s) $ for $w \in  \mathbb{R}^3$ and $A_t \in SO(3)$ one obtains in a quite similar way that
   \[	b_1 = s_1 T+ s_2N+s_3B-\gamma\times b_0 \]is constant for elastic strips, where 
   \begin{align}\begin{split}
     	s_1 &:= 2\kappa\lambda(1+\lambda^2) \\
     	s_2 &:= \frac{1}{\kappa}(2\kappa\lambda(1+\lambda^2))' \\
     	s_3 &:= \kappa(1+\lambda^2)(1-\lambda^2).\\
     	\end{split}\end{align} 
         In the following we show that elastic strips are characterized by $b_0$ being constant.~More precisely, we show that the constants of $b_0$
         is equivalent to the Euler--Lagrange equations.

   \begin{ther}\mbox{}
{\ }\\
    A strip is elastic iff the force vector $b_0= a_1 T + a_2 N + a_3 B$~    is constant, with
    \begin{align}\begin{split}
     a_1	&:= \frac{1}{2} (\kappa^2(1+\lambda^2)^2+\mu)) \\ 
     a_2	&:= \kappa'(1 +\lambda^2 )^2+2\kappa(1 +\lambda^2 )\lambda\lambda'\\
     a_3	&:= -\left(\kappa^2(1+\lambda^2 )^2 \lambda + (\frac{\kappa'}{\kappa}(1+\lambda^2)2\lambda)' +((1+\lambda^2 )2\lambda)'' \right).
    \end{split}\end{align}
\end{ther}

   \begin{proof}\mbox{}
   
   	It suffices to show 
   	\begin{align}\label{b_0'}
   	b_0'=f_1 N + f_2B ,
   	\end{align} 
   	since $ b_0 $~is constant iff $b_0'=0$ iff $f_1=f_2=0$ iff $\gamma $ defines an elastic strip. Using the Frenet formulas we get
      
    \begin{align}\begin{split}
      b_0'	&= \underbrace{(a_1'-\kappa a_2)}_{= 0} T
      					+\underbrace{(a_2' + \kappa a_1 -\kappa\lambda a_3)}_{f_1} N 
      					+\underbrace{(a_3'+ \kappa\lambda a_2)}_{f_2} B \\
      			&= f_1 N + f_2 B.
    \end{split}\end{align}
    
    Since $a_2 = \frac{1}{\kappa}  a_1'$ the T-coefficient drops out and one calculates
    
    \begin{align*}\begin{split}
    	a_2'+\kappa a_1-\kappa\lambda a_3	&= (\kappa'(1 +\lambda^2 )^2+2\kappa(1 +\lambda^2 )\lambda\lambda')'\\
    																		& \quad+ \frac{\kappa}{2}(\kappa^2(\lambda^2+1)^2+\mu ) \\
    																		& \quad+ \kappa \lambda   \left( \kappa^2(1+\lambda^2)^2  \lambda 
    																			 + \left(\frac{\kappa'}{\kappa} (1+\lambda^2) 2\lambda \right)'
    																		+ ( (1+\lambda^2) 2 \lambda )''\right)\\ 
    																		&=  f_1
    \end{split}\end{align*}

    \begin{align*}\begin{split}
      a_3'+\lambda\kappa a_2	&=-\left(\kappa^2(1+\lambda^2 )^2 \lambda + \left(\frac{\kappa'}{\kappa}(1+\lambda^2 )2\lambda\right)' +((1+\lambda^2 )2\lambda)'' \right)'\\
     													&  \quad+\kappa \lambda\left( \kappa'\left((1 +\lambda^2 )\right)^2+2\kappa(1 +\lambda^2 )\lambda\lambda'\right)\\
    													&= f_2~.
    \end{split}\end{align*}
    
    \end{proof}

    \begin{theo}
    \mbox{}
{\ }\\
     	For an elastic strip the torque vector $b_1 = s_1 T+ s_2 N+s_3 B-\gamma\times b_0$ is constant, whereby\\
     	 \begin{align*}\begin{split}
     	s_1 &:= 2\kappa\lambda(1+\lambda^2) \\
     	s_2 &:= \frac{1}{\kappa}(2\kappa\lambda(1+\lambda^2))' \\
     	s_3 &:= \kappa(1+\lambda^2)(1-\lambda^2).\\
     	\end{split}\end{align*} 
     	
     	Furthermore, if $b_1$ is constant but $ \gamma$ does not define 
      an elastic strip, then $ \left|\gamma\right|$ is conserved.
    \end{theo}

\begin{proof}\mbox{}

First we show
\begin{align}\label{not elasic}
b_1'=-\gamma \times b_0'. 
\end{align}
This implies that $b_1$ is conserved.
Using the Frenet formulas one obtains

\begin{align*}
\begin{split}
	b_1' 	&=  (s_1'-\kappa s_2) T \\
				& + (\underbrace{\kappa s_1+s_2'-\lambda  \kappa  s_3}_{-a_3} -\underbrace{\left\langle T\times b_0,N\right\rangle}_{-a_3}) N\\
				& + (\underbrace{s_3'+\lambda\kappa s_2}_{a_2}- \underbrace{\left\langle T\times b_0,B\right\rangle}_{a_2}) B -\gamma \times b_0'.\\
\end{split} \end{align*}

Since ~$s_2=\frac{1}{\kappa} s_1' $~, the T-coefficient drops out.
 Using
\begin{align*}
	\left\langle T\times b_0,N\right\rangle &= \left\langle N\times T,b_0\right\rangle=-a_3\\
	\left\langle T\times b_0,B\right\rangle &= \left\langle B\times T,b_0\right\rangle=\quad a_2
\end{align*}
and the definition of $b_0,$ one shows that the terms in front of N and B vanish as well:\\

The N term vanishes since
\begin{align} \begin{split}\label{a}
	\kappa s_1+s_2'-\lambda\kappa s_3		&= 2\kappa^2(1+\lambda^2)\lambda+(\frac{1}{\kappa}(2\kappa\lambda(1+\lambda^2 ))')'\\
																			&\quad-\lambda\kappa^2(1+\lambda^2)(1-\lambda^2) \\
																			&= \lambda \kappa ^2 (1+\lambda^2)^2 + \left(\frac{\kappa'}{\kappa}2 \lambda(1+\lambda^2)\right)'\\
																			&  \quad+ (2 \lambda(1+\lambda^2))''= -a_3~.\\
\end{split}\end{align}

The B term vanishes since
\begin{align} \begin{split}\label{s_2}
 s_3'+\lambda\kappa s_2 
 &= \left( \kappa(1+\lambda^2)(1-\lambda^2) \right)' + \lambda \left(2 \lambda \kappa (1+\lambda^2)\right)' \\
 &= \kappa'(1+\lambda^2)(1-\lambda^2)+\kappa((1+\lambda^2)(1-\lambda^2))' \\
 				& \quad + 2\kappa'\lambda^2(1+\lambda^2)+2\kappa\lambda(\lambda(1+\lambda^2))'\\
 &= \kappa'(1+\lambda^2)^2+\kappa(-4\lambda^3\lambda'+6\lambda^3\lambda'+2\lambda\lambda') \\
 		 &= \kappa' (1+\lambda^2)^2 + 2 \kappa(1+\lambda^2)\lambda\lambda'\\
 &=  a_2~.
\end{split}\end{align}
This shows that~$b_1'=-\gamma \times b_0'.$ Therefore $b_1$ is constant if $b_0$ is constant.\\
 Conversely we assume $b_1$ is constant but $\gamma$ does not define an elastic strip. From (\ref{b_0'}) we obtain $b_0'=f_1N+f_2B,$ which implies
\[0=b_1'=-\gamma\times(f_1N+f_2B).\] \\ 
Hence~$\gamma$~lies in the span of N and B. In particular we obtain\\
\[\left\langle \gamma,\gamma\right\rangle'=2\left\langle \gamma,T\right\rangle=0~.\]
\end{proof}

 \begin{proposition}
	Let $\gamma$ define an elastic strip such that $a_1$ is constant, then $\gamma$ is a cylindrical helix.
\end{proposition}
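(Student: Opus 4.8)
The plan is to feed the hypothesis straight into the First Conservation Law. Since $a_1$ is constant we have $a_1'=0$, and because $a_2=a_1'/\kappa$ (as used in the proof of the First Conservation Law) this forces $a_2\equiv 0$, so the constant force vector reduces to $b_0=a_1T+a_3B$. Taking $\gamma$ arclength parametrized and differentiating this identity with the Frenet formulas $T'=\kappa N$, $B'=-\lambda\kappa N$ gives
\begin{align*}
0=b_0'=\kappa\,(a_1-\lambda a_3)\,N + a_3'\,B .
\end{align*}
Comparing the $N$- and $B$-components yields $a_3'=0$ and $\kappa(a_1-\lambda a_3)=0$; since $\gamma$ is a Frenet curve, $\kappa$ never vanishes, so $a_3$ is constant and $a_1=\lambda a_3$.

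From here I would conclude that $\lambda$ is constant. If $a_3\neq 0$ this is immediate, $\lambda=a_1/a_3$; if $a_3=0$ then $a_1=\lambda a_3=0$, so $b_0\equiv 0$, i.e.\ $\gamma$ is a force--free strip (the degenerate case discussed below). Excluding that case we get $\lambda=\tau/\kappa$ constant, and then Lancret's theorem (a space curve is a cylindrical/general helix iff $\tau/\kappa$ is constant) says $\gamma$ is a cylindrical helix, which is the claim. One can note slightly more: $2a_1=\kappa^2(1+\lambda^2)^2+\mu$ with $a_1$, $\lambda$ and $\mu$ constant forces $\kappa$ constant, hence $\gamma$ is in fact a circular helix.

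The only real obstacle is the branch $b_0\equiv 0$, that is, a force--free strip: there $a_1=\tfrac12(\kappa^2(1+\lambda^2)^2+\mu)=0$ is consistent with $\lambda$ non-constant ($\lambda$ then satisfies only the second-order Euler--Lagrange equation $f_2=0$ together with $\kappa(1+\lambda^2)$ constant), and such curves are generally not helices — they are precisely the strips described by Theorem 2. So the statement should be understood with the harmless extra assumption $b_0\neq 0$, which holds automatically under the normalization $\mu=1$ of the Remark, since then $a_1\ge\tfrac12>0$. Apart from keeping track of this case, every step above is a one-line Frenet computation, so no further difficulty is expected.
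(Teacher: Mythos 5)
Your argument is correct, and it takes a different route from the paper. The paper's proof is purely geometric: since $b_0$ is constant and $\langle b_0,T\rangle=a_1$ is constant, the tangent makes a constant angle with the fixed vector $b_0$, so $\gamma$ is a slope line; Lancret's theorem (in the direction ``generalized helix $\Rightarrow$ $\tau/\kappa$ constant'') gives $\lambda$ constant, and then the formula $a_1=\tfrac12(\kappa^2(1+\lambda^2)^2+\mu)$ forces $\kappa$ constant. You instead work algebraically: $a_1'=0$ and the identity $a_2=a_1'/\kappa$ kill $a_2$, and the componentwise expansion of $b_0'=0$ yields $a_3'=0$ and $a_1=\lambda a_3$, giving $\lambda$ constant directly (away from $a_3=0$), with Lancret needed only at the very end, or not at all once $\kappa$ and $\tau$ are both seen to be constant. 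What your computation buys is that it makes the exceptional branch visible: $a_3=0$ forces $b_0=0$, i.e.\ a force--free strip, and these (Theorem 2, with non-constant $\lambda$) do satisfy ``elastic with $a_1$ constant'' without being helices. The paper's slope-line inference silently assumes $b_0\neq 0$ (one cannot speak of a constant angle with the zero vector), so your caveat is not a defect of your proof but a genuine sharpening of the statement; your observation that the normalization $\mu>0$ rules the degenerate branch out, while force--free strips require $\mu<0$, is also correct. The paper's version buys brevity; yours buys an explicit identification of exactly where the hypothesis $b_0\neq 0$ is used and, as a by-product, the stronger conclusion that $\kappa$, $\tau$ are individually constant, so $\gamma$ is in fact a circular helix.
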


\begin{proof}\mbox{}

	For an elastic strip $b_0$ is constant. Since~$\left\langle b_0,T\right\rangle=a_1$ is constant, 
	we conclude that~$\gamma$~is a slope line.
	This implies~$\lambda$~to be constant.
	Since 
	
	\[a_1=\frac{1}{2}(\kappa^2(1+\lambda^2)^2+\mu)~\]
	
	 is constant,~$\kappa$~ must be constant as well.
	Therefore the strip is defined by a cylindrical helix.\\
	\end{proof}

	The following proposition was already known by Th. Hangan and C. Murea [\ref{h2}]. Since they worked with different    Euler-Lagrange equations, we give a proof which only uses the conversation laws.
	
 \begin{proposition}\mbox{}
 Let $\gamma$$\colon$$[0,L]\mapsto \mathbb{R}^3$ be an arclength parametrized non-planar geodesic on a cylinder with non-constant curvature, then $\gamma$
 defines an elastic strip iff the planar curve 
 \begin{align}
 \tilde{\gamma}(s)=\gamma(as)-\frac{\lambda}{a}s(\lambda T+B),
 \end{align}
 with $a:=\sqrt{1+\lambda^2}$, is an elastic curve with zero energy, i.e. $0=\tilde{\kappa}'^2 + \frac{1}{4}\tilde{\kappa}^4+l\tilde{\kappa}^2$ for some $l<0$.
  \end{proposition}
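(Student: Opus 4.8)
The plan is to derive the statement from the first conservation law after unrolling the cylinder. First I would note that a non-planar geodesic of a cylinder unrolls to a straight line, hence makes a constant angle with the rulings; by Lancret's theorem $\lambda$ is therefore a nonzero constant, and then the modified Darboux vector $D=\lambda T+B$ is itself constant, since $D'=\lambda T'+B'=\lambda\kappa N-\lambda\kappa N=0$, with $|D|=a:=\sqrt{1+\lambda^2}$. Consequently $\tilde\gamma(s)=\gamma(as)-\tfrac{\lambda}{a}s\,D$ satisfies $\tilde\gamma'(s)=aT-\tfrac\lambda a D=\tfrac1a(T-\lambda B)$, a unit vector orthogonal to the fixed vector $D$, and $\tilde\gamma''(s)=T'-\lambda B'=a^2\kappa N$ (Frenet data evaluated at $as$). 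So $\tilde\gamma$ is an arclength-parametrized planar curve — up to a translation it is the cross-section of the cylinder — with curvature $\tilde\kappa(s)=a^2\kappa(as)$. Write a prime for the arclength derivative along $\gamma$ (parameter $\sigma$) and a dot for that along $\tilde\gamma$ (parameter $s$, $\sigma=as$), so the statement's $\tilde\kappa'$ is $\dot{\tilde\kappa}$; then $\kappa(as)=a^{-2}\tilde\kappa$, $\kappa'(as)=a^{-3}\dot{\tilde\kappa}$, $\kappa''(as)=a^{-4}\ddot{\tilde\kappa}$ and $(\kappa'/\kappa)''(as)=a^{-3}(\dot{\tilde\kappa}/\tilde\kappa)^{\cdot\cdot}$, which are the only substitutions needed.

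Next I would specialize the first conservation law, which says $\gamma$ is elastic iff $b_0$ is constant, i.e. iff $f_1=a_2'+\kappa a_1-\kappa\lambda a_3=0$ and $f_2=a_3'+\kappa\lambda a_2=0$; with $\lambda'=0$ the coefficients $a_1=\tfrac12(a^4\kappa^2+\mu)$, $a_2=a^4\kappa'$, $a_3=-a^4\lambda\kappa^2-2\lambda a^2(\kappa'/\kappa)'$ simplify drastically. Substituting the reparametrization formulas, $f_2=0$ becomes $\tilde\kappa\dot{\tilde\kappa}+2(\dot{\tilde\kappa}/\tilde\kappa)^{\cdot\cdot}=0$, which integrates once to
\[
\tfrac12\tilde\kappa^2+2(\dot{\tilde\kappa}/\tilde\kappa)^{\cdot}=c_1,\qquad\text{equivalently}\qquad \ddot{\tilde\kappa}-\frac{\dot{\tilde\kappa}^2}{\tilde\kappa}=\frac{c_1}{2}\tilde\kappa-\frac14\tilde\kappa^3,
\]
for a constant $c_1$. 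Using this to eliminate $\dot{\tilde\kappa}^2/\tilde\kappa$ inside $f_1$, the whole $\lambda$-dependence collapses — every coefficient reassembles via $a^2-\lambda^2=1$ — and $f_1=0$ turns into the free-elastica equation
\[
2\ddot{\tilde\kappa}+\tilde\kappa^3+2l\tilde\kappa=0,\qquad l:=\frac{\mu+2\lambda^2 c_1}{2a^2}.
\]

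With these two equations in hand the equivalence follows both ways. If $\gamma$ is elastic, the elastica equation has the first integral $\dot{\tilde\kappa}^2+\tfrac14\tilde\kappa^4+l\tilde\kappa^2=E$; eliminating $\ddot{\tilde\kappa}$ between this and the first integral of $f_2=0$ gives $E=-\tfrac{c_1}{2}\tilde\kappa^2$, so $c_1=0$ and $E=0$ because $\tilde\kappa$ is non-constant — hence $l=\mu/(2a^2)$ and $\tilde\gamma$ is an elastic curve of zero energy. Moreover $l<0$: the positive function $\tilde\kappa$ can satisfy $\dot{\tilde\kappa}^2+\tfrac14\tilde\kappa^4+l\tilde\kappa^2=0$ only where $\tfrac14\tilde\kappa^2+l\le0$. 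Conversely, given that relation with some $l<0$, put $\mu:=2a^2l$ (the Lagrange multiplier we have to exhibit); differentiating yields $\ddot{\tilde\kappa}=-\tfrac12\tilde\kappa^3-l\tilde\kappa$, and the relation itself yields $\dot{\tilde\kappa}^2/\tilde\kappa=-\tfrac14\tilde\kappa^3-l\tilde\kappa$, both polynomial in $\tilde\kappa$; inserting them into the expressions for $f_1$, $f_2$ obtained above, all terms cancel, so $b_0$ is constant and $\gamma$ is elastic. (As an alternative to using $f_1,f_2$ one may expand $b_0$ in the helix-adapted frame $\{\tilde\gamma',N,D/a\}$: since $D$ is fixed, $b_0$ constant is equivalent to constancy of $\langle b_0,D\rangle$ and of the cross-section component of $b_0$, which reproduces the same computation.)

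The step I expect to be the main obstacle is the substitution turning $f_1,f_2$ into ODEs for $\tilde\kappa$: it is mechanical but lengthy, and one must track the bookkeeping carefully to watch the elastica equation crystallize. The one genuinely non-routine point is that being an elastic strip imposes the two conditions $f_1=f_2=0$, whereas a zero-energy elastica is governed by a single ODE; the slack sits in the integration constant $c_1$, and it is exactly the hypothesis that $\kappa$ (hence $\tilde\kappa$) is non-constant that pins $c_1=0$ and, at the same time, forces the multiplier $l=\mu/(2a^2)$ to be negative.
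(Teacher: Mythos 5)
Your argument is correct, and it follows a genuinely different route from the paper's. You work entirely with the first conservation law, i.e.\ with the Euler--Lagrange equations $f_1=f_2=0$: after observing that $\lambda$ is a nonzero constant and that $\tilde\kappa(s)=a^2\kappa(as)$ is the curvature of the planar curve $\tilde\gamma$, you reduce $f_2=0$ to $\tilde\kappa\tilde\kappa'+2(\tilde\kappa'/\tilde\kappa)''=0$, integrate once to introduce the constant $c_1$, and then see $f_1=0$ collapse (via $1+\lambda^2=a^2$) to the elastica equation $2\tilde\kappa''+\tilde\kappa^3+2l\tilde\kappa=0$ with $l=(\mu+2\lambda^2c_1)/(2a^2)$; non-constancy of $\kappa$ then forces $c_1=0$ and energy $E=0$, and the converse is the same computation run backwards with $\mu:=2a^2l$ (I checked the coefficient bookkeeping; it does cancel as you claim). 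The paper instead uses both conservation laws: projecting the torque $b_1$ onto the constant direction $\lambda T+B$ shows that $\tilde\kappa$ is an affine function of the distance from $\tilde\gamma$ to a fixed axis, which is the classical characterization of planar elastic curves, and then projecting $b_0$ onto the same direction and inserting the elastica first integral forces $E=0$; the converse reverses these two projections to recover $f_2=0$ and then $f_1=0$. The paper's route is more geometric and explains conceptually why a planar elastica appears, whereas yours is a self-contained ODE reduction that never invokes $b_1$ or the distance-to-axis characterization, and it yields two small extras the paper leaves implicit: the explicit relation $l=\mu/(2(1+\lambda^2))$ and the verification that $l<0$. The only caveat --- differentiating the zero-energy relation in the converse tacitly divides by $\tilde\kappa'$ --- is shared by the paper's own use of its equation for $(\kappa'/\kappa)'$, so it is not a gap relative to the paper's standard of rigor.
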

 
 \begin{proof}
 
 Geodesics on a cylinder are slope lines in $\mathbb{R}^3$. Thus $\lambda$ and $\lambda T+B$ are constant. If $\gamma$ defines an elastic strip, then $b_0$ and $b_1$ are conserved. Therefore
 \begin{align} \begin{split}
 \left\langle b_1,\lambda T+B\right\rangle &= \lambda s_1+s_3-\left\langle \gamma\times b_0,\lambda T+B\right\rangle\\
                                           &= \kappa(1+\lambda^2)^2-\left\langle \tilde{\gamma},b_0\times(\lambda T+B)\right\rangle,\\
\end{split} \end{align}
hence 
\begin{align}\label{BÖ4}
\left\langle \tilde{\gamma},\frac{1}{1+\lambda^2}b_0\times(\lambda T+B)\right\rangle= \tilde{\kappa}-\left\langle b_1,\frac{1}{1+\lambda^2}(\lambda T+B)\right\rangle ,
\end{align}
where $\tilde{\kappa}(s)=a^2\kappa(as)$.
This shows that the distance from $\tilde{\gamma}$ and the axis $(\lambda T+B)\times (b_0\times(\lambda T+B))$ is proportional to it's curvature, which is a characterization for planar elastic curves. Thus, there exist an $E\in \mathbb{R}$ with
\begin{align} \begin{split}
E &=\tilde{\kappa}'^2(s) + \frac{1}{4}\tilde{\kappa}^4(s)+l\tilde{\kappa}^2(s)\\
  &=a^6\kappa'^2(as) + \frac{a^8}{4}\kappa^4(as)+la^4\kappa^2(as)\\
\end{split}\end{align}
or equivalently
\begin{align}\label{Bö}
\kappa'^2(s) + \frac{1+\lambda^2}{4}\kappa^4(s)+\frac{l}{1+\lambda^2}\kappa^2(s)=\frac{E}{(1+\lambda^2)^3}.
\end{align}
From (\ref{Bö}) we obtain
\begin{align} \label{BÖ1}
(\frac{\kappa'}{\kappa})'=-\frac{E}{(1+\lambda^2)^3\kappa^2}-\frac{1}{4}(1+\lambda^2)\kappa^2.
\end{align}
Computing 
\begin{align} \begin{split}\label{tüb}
\left\langle b_0,\lambda T+B\right\rangle &=\lambda a_1+a_3\\
                                          &=\frac{1}{2}\lambda \mu-2(\frac{\kappa'}{\kappa})'(1+\lambda^2)\lambda -\frac{1}{2}\kappa^2(1+\lambda^2)^2\lambda 
\end{split}
\end{align}
yields
\begin{align}\label{BÖ2}
2(1+\lambda^2)\lambda(\frac{\kappa'}{\kappa})'+\frac{1}{2}\kappa^2(1+\lambda^2)^2\lambda =\frac{1}{2}\lambda \mu -\left\langle b_0,\lambda T+B\right\rangle.
\end{align}
Plugging (\ref{BÖ1}) in (\ref{BÖ2}) we get
\begin{align}
-\frac{2\lambda E}{(1+\lambda^2)^2\kappa^2}=\frac{1}{2}\lambda \mu -\left\langle b_0,\lambda T+B\right\rangle.
\end{align}
Since $\gamma$ is a non-planar curve with non-constant curvature, we obtain $E=0$.
Conversely if $\tilde{\gamma}$ is an elastic curve with zero energy then it is apparent from (\ref{BÖ1}) and (\ref{tüb}) that $\left\langle b_0,\lambda T+B\right\rangle=\frac{1}{2}\lambda \mu$ is constant. Hence
\begin{align}
0=\left\langle b_0',\lambda T+B\right\rangle=\left\langle f_1N+f_2B,\lambda T+B\right\rangle=f_2.
\end{align}
From (\ref{BÖ4}) one sees that $\left\langle b_1,\lambda T+B\right\rangle$ is conserved as well and therefore
\begin{align} \begin{split}
0=\left\langle b_1'(as),\lambda T+B\right\rangle &=-\left\langle \gamma(as) \times (f_1(as) N(as)+f_2(as) B(as)),\lambda T+B\right\rangle\\
                                             &=-f_1(as)\left\langle \gamma(as),N(as)\times \lambda T+B\right\rangle\\
                                             &=-f_1(as)a\left\langle \tilde{\gamma}(s),\tilde{T}(s)\right\rangle\\
                                             &=-f_1(as)a\frac{1}{2}\left\langle \tilde{\gamma}(s),\tilde{\gamma}(s)\right\rangle'.
\end{split}
\end{align}
This shows that $f_1$ vanishes as well.
 \end{proof}

 \section{Momentum strips}

\begin{definition}
A curve $\gamma$ defines a momentum strip, if 
\begin{align}
\left\langle b_1+\gamma\times b_0,T\right\rangle
\end{align}
 is a constant non-zero function.
\end{definition}
In [\ref{ls}] it is shown that an arclength parametrized spherical elastic curve with geodesic curvature $\lambda$ satisfies
\begin{align}\label{f_3}
 \lambda'^2+\frac{1}{4}\lambda^4+(1-\frac{l}{2})\lambda^2=A,
 \end{align}
 where $l$ denotes the Lagrange multiplier and $A$ an arbitrary constant.

\begin{1.theorem}
For an elastic momentum strip with Lagrange multiplier $\mu$ the binormal $B$ of $\gamma$ is a spherical elastic curve with Lagrange multiplier $-\mu$. Conversely for each such arclength parametrized curve $B\colon [0,\hat{L}]\rightarrow S^2$  with non-vanishing, non-constant geodesic curvature $\lambda$ and $T:=B\times B'$, the space curve

\begin{align}
\gamma(t)=\int_{0}^{t}(1+\frac{1}{\lambda^2(s)})T(s)ds
\end{align}

defines an elastic momentum strip with
\begin{align}
 S_{\mu}(\gamma)=\int_{0}^{\hat{L}}(1+\lambda^2)dt-\mu L(\gamma).
\end{align}
\end{1.theorem}

\begin{proof}
By scaling the curve $\gamma$ one can achieve $s_1=\left\langle b_1+\gamma\times b_0,T\right\rangle=2$, thus
\begin{align}
\kappa=\frac{1}{\lambda(1+\lambda^2)}
\end{align}
\begin{align} \begin{split} \label{m_5}
\left\langle b_0,b_0\right\rangle &=\frac{1}{4}(\frac{1}{\lambda^2(s)}+\mu)^2+ \frac{1}{\lambda^4(s)}\lambda'^2(s)(1+\lambda^2(s))^2+\frac{1}{\lambda^2(s)}\\
                                  &=\frac{\tilde{\lambda}'^2(t)}{\tilde{\lambda}^4(t)}+\frac{1}{4}\frac{1}{\tilde{\lambda}^4(t)}+\frac{1}{\tilde{\lambda}^2(t)}(\frac{\mu}{2}+1)+\frac{\mu^2}{4},
 \end{split} \end{align}

 where $\tilde{\lambda}(t):=\lambda(s(t))$, $t'(s):={1+\lambda^2(s(t))}$. Using the Frenet equations (\ref{frenet}) it is apparent that $t$ is the arclength parameter of $B$ and $\frac{1}{\tilde{\lambda}}$ its curvature.
(\ref{m_5}) is equivalent to
\begin{align}\label{m_1}
-\frac{1}{4}=\tilde{\lambda}'^2(t)+\tilde{\lambda}^4(t)(\frac{\mu^2}{4}-\left\langle b_0,b_0\right\rangle´) +(\frac{\mu}{2}+1)\tilde{\lambda}^2(t).
\end{align}

Obviously any solution of (\ref{m_1}) has no zeros, thus we obtain
\begin{align} \label{m_2}
(\frac{1}{\tilde{\lambda}(t)})'^2+\frac{1}{4}(\frac{1}{\tilde{\lambda}(t)})^4+(\frac{\mu}{2}+1)(\frac{1}{\tilde{\lambda}(t)})^2=-(\frac{\mu^2}{4}-\left\langle b_0,b_0\right\rangle).
\end{align}
 In particular we obtain from (\ref{m_2}) that B is a spherical elastic curve with Lagrange multiplier $-\mu$ for an elastic momentum strip.
Conversely, let B be such an arclength parametrized spherical elastic curve with non-vanishing, non-constant geodesic curvature $\lambda$, then one can easily check from
\begin{center}
$
\begin{array}{ccccc}\label{Frenetm}
{\ \quad \ } \qquad  B'  &=&							 &- N	&					\\
{\ \quad \ } \qquad -N' &=& \lambda T	& 					&- B \\
{\ \quad \ } \qquad  T'  &=& 						&  \lambda N &
\end{array}
$
\end{center}
that $\gamma(t)=\int_{0}^{t}(1+\frac{1}{\lambda^2(s)})T(s)ds$ has curvature $\kappa=\frac{1}{\frac{1}{\lambda}(1+\frac{1}{\lambda^2})}$ and modified torsion $\frac{1}{\lambda}$. Hence $\gamma$ defines a momentum strip. It remains to show that $\gamma$ defines an elastic strip. Consider the arclength reparametrized curve 
$\tilde{\gamma}(s)=\gamma(t(s))$, with $t'(s):=\frac{1}{1+\frac{1}{\lambda^2(s)}}$. Substituting $\frac{1}{\tilde{\lambda}}$ for $\lambda$ in the first equation of (\ref{m_5}) yields
\begin{align} \label{m_6}
\left\langle b_0,b_0\right\rangle=\lambda'^2(t(s))+\frac{1}{4}\lambda^4(t(s))+(1+\frac{\mu}{2})\lambda^2(t(s))+\frac{\mu^2}{4}.
\end{align}
Since $B$ is a spherical elastic curve with Lagrange multiplier $-\mu$ we get that $\left\langle b_0,b_0\right\rangle$ is conserved. Furthermore $\left\langle b_0,b_1\right\rangle$ is constant due algebraic reasons:
\begin{align} \begin{split}
\left\langle b_0,b_1\right\rangle &= s_1a_1+s_2a_2+s_3a_3\\
                                  &=\lambda^2+\mu -\lambda^2(1-\frac{1}{\lambda^2})\\
                                  &=\mu +1.
\end{split}\end{align}
Therefore
\begin{align}\begin{split}
0&=\left\langle b_0,b_1\right\rangle'\\
&= \left\langle b_0',b_1\right\rangle+\left\langle b_0,-\gamma \times b_0'\right\rangle\\
&=\left\langle b_0',b_1\right\rangle-\left\langle b_0',b_0\times \gamma\right\rangle\\
&=\left\langle b_0',b_1+\gamma \times b_0\right\rangle\\
&=\left\langle f_1N+f_2B,b_1+\gamma\times b_0\right\rangle\\
&=f_2\left\langle B,b_1+\gamma\times b_0\right\rangle\\
&=f_2s_3\\
&=f_2(1-\frac{1}{\lambda^2})\lambda.
\end{split}\end{align}
Since $\lambda$ is a non constant solution of (\ref{m_6}) we get $f_2=0$. $\left\langle b_0,b_0\right\rangle$ being constant yields
\begin{align}
0=\left\langle b_0,b_0\right\rangle'=\left\langle f_1N,b_0\right\rangle=f_1a_2=f_1 \lambda'(1+\frac{1}{\lambda^2}).
\end{align}
Hence $f_1$ vanishes as well and $\gamma$ defines an elastic strip.
\end{proof}

\section{Force-free strips}

\begin{definition}
An elastic strip is called force--free, if~$b_0=0$~.
\end{definition}
From $a_1\equiv0$ it is evident that $\mu<0$. By scaling the curve one can achieve that $\mu=-1$.

\begin{lemma}
Let $\gamma$ be a curve with non-constant modified torsion $\lambda=\frac{\tau}{\kappa}$. Then the following conditions
are equivalent:
\begin{enumerate}
\item $\gamma$ defines
a force-free strip,\\
\item $b_1=2\lambda T+ 2\lambda'(1+\lambda^2)N+ (1-\lambda^2)B $ is constant,\\
\item   $a_1\equiv0$ and $\left\langle J,J\right\rangle$ is conserved, $J:=s_1T+s_2N+s_3B$.
\end{enumerate}

\end{lemma}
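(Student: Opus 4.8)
\emph{Proof proposal.} The plan is to reduce the whole equivalence to three identities that hold for \emph{every} arclength parametrized Frenet curve, not merely for critical ones: $b_0'=f_1N+f_2B$, which is (\ref{b_0'}); $b_1'=-\gamma\times b_0'$, which is (\ref{not elasic}) (its proof uses only the Frenet equations and the algebraic definitions of the $a_i$ and $s_i$); and, by expanding $b_1=J-\gamma\times b_0$ in the second identity, $J'=T\times b_0=a_2B-a_3N$. In addition, since the normalization $\mu=-1$ is in force throughout this section, $a_1\equiv0$ is equivalent to $\kappa(1+\lambda^2)\equiv1$ (the positive root, as $\kappa>0$), and in that case (\ref{s_i}) collapses to $s_1=2\lambda,\ s_2=2\lambda'(1+\lambda^2),\ s_3=1-\lambda^2$, so that $J$ becomes the vector $b_1$ of item~2, while $a_2=\frac{1}{\kappa}a_1'\equiv0$.

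Granting this, the three implications are short. For $(1)\Rightarrow(2)$: a force--free strip is elastic with $b_0=0$, hence $a_1\equiv0$, so $J$ is the asserted $b_1$, and $b_1=J-\gamma\times b_0=J$ is constant by the Second Conservation Law. For $(2)\Rightarrow(3)$: set $V=2\lambda T+2\lambda'(1+\lambda^2)N+(1-\lambda^2)B$ and differentiate using (\ref{frenet}); the $T$--component of $V'$ is $2\lambda'(1-\kappa(1+\lambda^2))$, which must vanish, and since $\lambda$ is non--constant this forces $\kappa(1+\lambda^2)\equiv1$, i.e.\ $a_1\equiv0$; then $V=J$ is constant, so $\langle J,J\rangle$ is conserved. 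For $(3)\Rightarrow(1)$: from $a_1\equiv0$ we get $a_2\equiv0$ and $J'=-a_3N$, hence $\langle J,J\rangle'=2\langle J,J'\rangle=-2a_3\langle J,N\rangle=-2a_3s_2=-4a_3\lambda'(1+\lambda^2)$; conservation of $\langle J,J\rangle$ then forces $a_3\lambda'\equiv0$, and non--constancy of $\lambda$ gives $a_3\equiv0$, whence $b_0=a_1T+a_2N+a_3B=0$; finally $b_0\equiv0$ implies $b_0'\equiv0$, i.e.\ $f_1=f_2=0$, so $\gamma$ is an elastic strip, and being elastic with $b_0=0$ it is force--free.

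The computation itself is routine; the points that need care are, first, checking that $b_0'=f_1N+f_2B$ and $J'=a_2B-a_3N$ are genuine algebraic identities rather than consequences of the Euler--Lagrange equations, since this is exactly what makes the equivalences cheap, and, second, the two places where non--constancy of $\lambda$ is used to pass from a vanishing product ($\lambda'(1-\kappa(1+\lambda^2))\equiv0$, respectively $a_3\lambda'\equiv0$) to the vanishing of the intended factor. This last step implicitly rules out $\gamma$ carrying a planar subarc on which $\lambda'\equiv0$; that case is disposed of by inspecting the $N$--component of $V'$ there (it equals $\lambda\kappa(1+\lambda^2)$, forcing $\lambda\equiv0$ and thus $V=B$ constant on that arc), or simply excluded if $\gamma$ is assumed real--analytic.
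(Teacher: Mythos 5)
Your architecture coincides with the paper's: the paper treats $(1)\Rightarrow(2)$ and $(2)\Rightarrow(3)$ as immediate and only writes out $(3)\Rightarrow(1)$, and its argument there is exactly yours --- from $a_1\equiv0$ one gets $a_2\equiv0$ and $\kappa=\frac{1}{1+\lambda^2}$, the identities behind the Second Conservation Law give $J'=-a_3N+a_2B$, hence $0=\langle J,J\rangle'=-2a_3s_2=-4a_3\lambda'(1+\lambda^2)$, and $a_3\equiv0$ yields $b_0=0$, $f_1=f_2=0$. The substantive issue is how you discard the zeros of $\lambda'$. In the direction $(3)\Rightarrow(1)$ the problematic subarc is not a planar one but one on which $\lambda$ is a \emph{non-zero constant}: there $a_1\equiv0$ forces $\kappa(1+\lambda^2)=1$, hence $a_3=-\lambda\neq0$, while $a_3\lambda'\equiv0$ gives no information, and your patch via the $N$--component of $V'$ is unavailable because in this direction $V$ is not known to be constant. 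The paper closes this point by noting that $a_1\equiv0$ together with conservation of $\langle J,J\rangle$ gives the first integral $(4\lambda'^2+1)(1+\lambda^2)^2=\mathrm{const}$ (its (\ref{f_1})), so a non-constant $\lambda$ is elliptic and $\lambda'$ has isolated zeros, whence $a_3\equiv0$ by continuity; your fallback of real-analyticity accomplishes the same, but bare ``non-constancy of $\lambda$'' does not.

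In $(2)\Rightarrow(3)$ your patch also stops short: on a subarc with $\lambda'\equiv0$ your $N$--component computation does force $\lambda\equiv0$ there, but then $V=B$ is constant on that arc for \emph{every} $\kappa$, so constancy of $V$ does not give $a_1\equiv0$ there, which is what (3) asserts. A clean fix is the identity $\left|V\right|^2=4\lambda^2+4\lambda'^2(1+\lambda^2)^2+(1-\lambda^2)^2=(1+4\lambda'^2)(1+\lambda^2)^2$, valid for every Frenet curve: if $V$ is constant this quantity is constant, and on any subarc with $\lambda=\lambda'=0$ it equals $1$, which forces $\lambda\equiv0$ and $\lambda'\equiv0$ globally, contradicting the standing hypothesis that $\lambda$ is non-constant; so no such subarc exists and your pointwise argument applies everywhere by continuity. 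With these two repairs (or under a real-analyticity assumption, which also covers the paper's own one-line appeal to ellipticity) your proof is complete, and for the direction the paper actually writes out it is the same proof.
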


     \begin{proof}
     It remains only to prove that the third condition implies the first.
      From $a_1\equiv0$ we obtain $a_2\equiv0$ and
      \begin{align}\label{f_0}
           \kappa=\frac{1}{1+\lambda^2}.
      \end{align}
      With (\ref{f_0}) one computes
      \begin{align}\label{f_1}
      \left\langle J,J\right\rangle=(4\lambda'^2+1)(1+\lambda^2)^2.
      \end{align}
      From (\ref{a}) and (\ref{s_2}) one checks $J'=-a_3N+a_2B$, hence
      \begin{align} 
         \begin{split}
         0=\left\langle J,J'\right\rangle &=\left\langle J,-a_3N+a_2B\right\rangle\\
                                          &=-a_3s_2+a_2s_3\\
                                          &=-a_3s_2\\
                                          &=-a_32(1+\lambda^2)\lambda'.
      \end{split} \end{align}
      From (\ref{f_1}) it follows that $\lambda$ is a non-constant elliptic function, which implies $a_3=0$.
	   \end{proof}

\begin{2.theorem} 
For a force--free strip, the tangent vector $T$ of ~$\gamma$ is a spherical elastic curve with Lagrange multiplier 1. Conversely for each such spherical arclength parametrized curve $T\colon [0,\tilde{L}]\rightarrow S^2$ with geodesic curvature $\lambda$ the space curve
 \begin{align}\label{gamma}
 \gamma(t)=\int_{0}^{t}(1+\lambda^2(s))T(s)ds,
 \end{align}
 defines a force--free strip, with 
 \begin{align}\label{wert}
 S_{-1}(\gamma)=\int_{0}^{\tilde{L}}2(1+\lambda^2)dt=2L(\gamma). 
\end{align}
\end{2.theorem}
\begin{proof}\mbox{} 
 Let $\gamma$, with arclength parameter $s$, define a force--free strip. We already know from (\ref{f_1}) that $(4\lambda'^2+1)(1+\lambda^2)^2\equiv\left\langle b_1,b_1\right\rangle$. Applying the Frenet formulas (\ref{frenet}) it is apparent that~$\lambda$~is the geodesic 
 curvature of the spherical curve $T.$ Consider the reparametrized tangent vector
  \[\tilde{T}(t):=T(s(t))~with~ s'(t)=1+\lambda^2(s(t)),~\tilde{\lambda}(t)=\lambda(s(t)).\]
  Now one calculates:
  \begin{align} \begin{split}\label{15}
  & \tilde{ \lambda}'^2(t)+\frac{1}{4}\tilde{ \lambda}^4(t) +\frac{1}{2}\tilde{ \lambda}^2(t)\\
  &=\lambda'^2((s(t))(1+\lambda^2(s(t)))^2+\frac{1}{4}\lambda^4(s(t))+\frac{1}{2}\lambda^2(s(t))\\
  &=\frac{1}{4}\left(4\lambda'^2(s(t))(1+\lambda^2(s(t))^2+\lambda^4(s(t))+2\lambda^2(s(t))+1-1\right)\\
  &=\frac{1}{4}(4\lambda'^2(s(t))+1)((1+\lambda^2(s(t))^2)-\frac{1}{4}\\
  &=\frac{1}{4}\left\langle b_1,b_1\right\rangle-\frac{1}{4}.
  \end{split} \end{align}
  From (\ref{f_0}) one observes easily that~$\left|\tilde{T}'\right|=1.$ (\ref{15}) ensures that $\tilde{T}$ is a 
  spherical elastic curve with Lagrange multiplier 1.\\
   Conversely let $T$ be such an arclength parametrized spherical elastic curve with geodesic curvature $\lambda$, then the Frenet equations are
  \begin{center}
\begin{align} \begin{split}
{\ \quad \ }\qquad T' &=					\qquad  N					\\
{\ \quad \ }\qquad N' &= -T	 					        \qquad  +\lambda B \\
{\ \quad \ }\qquad B' &= 					\qquad -\lambda N. 
\end{split} \end{align}
\end{center}
One can easily check that (\ref{gamma}) has curvature $\kappa=\frac{1}{1+\lambda^2}$ and modified torsion $\lambda$, therefore $a_1=0$. Consider the arclength reparamertized curve
 \begin{align}\label{13}
 \tilde{\gamma}(s):=\gamma(t(s)), ~with~~ t'(s)=\frac{1}{1+\lambda^2(t(s))}.
 \end{align}
 (\ref{f_1}) yields
\begin{align} \begin{split}
\left\langle J,J\right\rangle &= (4\tilde{\lambda}'(s)+1)(1+\tilde{\lambda}^2(s))(1+\tilde{\lambda}^2)^2\\
                              &= (4\frac{\lambda'^2(t(s))}{(1+\lambda^2(t(s)))^2}+1)(1+\lambda^2(t(s))^2\\
                              &= 4\lambda'^2(t(s))+1+2\lambda^2(t(s))+\lambda^4(t(s)).
\end{split}
\end{align}
Therefore we obtain

\begin{align}\label{f_4}
\frac{\left\langle J,J\right\rangle-1}{4}=\lambda'^2(t(s))+\frac{1}{4}\lambda^4(t(s))+\frac{1}{2}\lambda^2(t(s)).
\end{align}
(\ref{f_4}) shows that $\left\langle J,J\right\rangle$ is conserved, since $T$ is a spherical elastic curve with Lagrange multiplier 1. The claim follows now from the previous lemma.
 \end{proof}

  \begin{figure}[!ht] \small
 \begin{center}
 \includegraphics[scale=0.35]{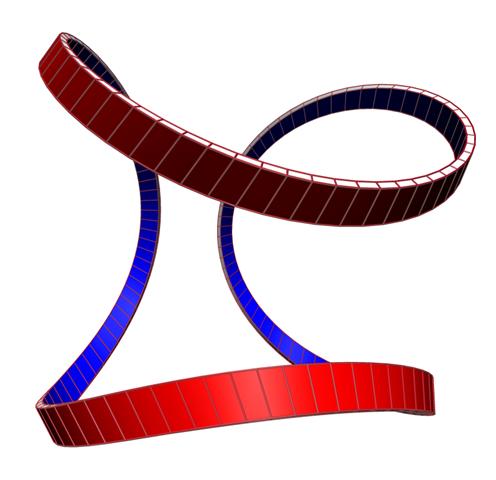} \includegraphics[scale=0.35]{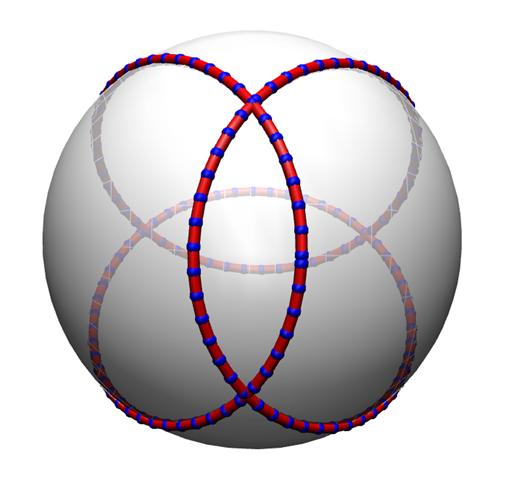}
 \caption{A force--free strip and the corresponding tangent curve.}
 \end{center}
 \end{figure}
 
In fact, there is a more elegant way to describe force--free strips without making use of the calculus of variation and differential equations we required for the previous arguments.\\
 We will look at the tangent image $T$ of a Frenet curve $\gamma$ in $\mathbb{R}^3$ as a regular curve, i.e. as an equivalence class of parameterizations. Then there are many (not necessarily arclength parametrized) curves $\tilde{\gamma}$$\colon$$[0,\tilde{L}]\mapsto \mathbb{R}^3$ with the same tangent image $T$. We temporarily fix the tangent image and minimize $S_{-1}$ among all curves with the same tangent image.\\

\begin{3.theorem}
 Let $T$$\colon$$[0,\tilde{L}]\mapsto S^2$ be an arclength parametrized spherical curve with curvature
 $\lambda$. Then among all Frenet curves with tangent image $T$ the curve (\ref{gamma}) minimizes $S_{-1}$
 and is unique up to translations.
\end{3.theorem}
 \begin{proof}
 For any curve $\tilde{\gamma}$ with tangent image $T$ and curvature $\tilde{\kappa}$ we have by the inequality between arithmetic and geometric mean 
 \begin{align*}\begin{split}
 S_{-1}(\tilde{\gamma})&=\int_{0}^{\tilde{L}}(\tilde{\kappa}^2(1+\lambda^2 )^2+1)\frac{1}{\tilde{\kappa}} dt \\
               &=\int_{0}^{\tilde{L}}(\tilde{\kappa}(1+\lambda^2 )^2+\frac{1}{\tilde{\kappa}}) dt\\
               &\geq\int_{0}^{\tilde{L}}2(1+\lambda^2)dt\\
               &=2L(\gamma)\\
               &=S_{-1}(\gamma).
               \end{split}\end{align*}
 \end{proof}
 For force--free strips the bending energy is critical even if the end points of $\gamma$ are allowed to move, since the force vector $b_0$ comes from the boundary terms of the first variational formula. This implies that the boundary term drops out automatically. There are no conditions on the end points of $\gamma$ and the variational
 problem of $\gamma$ can be reduced to a variational problem on the tangent image. Consequently one can deduce Theorem 2 from the previous theorem, since $\int_{0}^{\tilde{L}}2(1+\lambda^2)dt$ has a critical value iff $T$ is a spherical elastic curve with Lagrange multiplier 1. Langer and Singer [3] showed that there are infinitely many closed curves minimizing $\int_{0}^{\tilde{L}}2(1+\lambda^2)dt$. Each such curve defines a closed force--free strip, since the mass center of $T$ is zero.
 In [\ref{ulrich}], it is shown that each closed spherical elastic curve with Lagrange multiplier 1 corresponds to a Willmore torus in $S^3$. More precisely, let $T$ be such a spherical curve, then we can parametrize all possible adapted frames (lifted to $S^3$) along the curve (\ref{gamma}) by the frame cylinder $F:[0,\tilde{L}]\times S^1\rightarrow S^3$. $F$ is the preimage of the tangent image $T$ under the Hopf map $S^3\rightarrow S^2$ described in [\ref{ulrich}]. We obtain the following
 \begin{corollary}
 The frame cylinder of a force--free strip is Willmore in $S^3.$
 \end{corollary}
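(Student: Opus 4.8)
The plan is to reduce the Willmore property of the frame cylinder to a statement about the tangent curve $T$ that has already been established in the literature. Recall that by Theorem~3 (and Theorem~2) a force--free strip $\gamma$ is, up to translation, the curve \eqref{gamma} built from an arclength parametrized spherical elastic curve $T\colon[0,\tilde L]\to S^2$ with Lagrange multiplier $1$. So the corollary is really a statement about $T$: its ``frame cylinder'', the preimage $F = \pi^{-1}(T)$ under the Hopf map $\pi\colon S^3\to S^2$, is a Willmore surface in $S^3$.

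First I would make the identification of objects precise. Given the spherical elastic curve $T$, the Hopf preimage $F = \pi^{-1}(T)\subset S^3$ is a torus (a flat cylinder $[0,\tilde L]\times S^1$ with the ends glued by the holonomy of the Hopf fibration along $T$, which closes up when $T$ bounds zero signed area on $S^2$; this is exactly the ``mass center of $T$ is zero'' remark made just before the corollary for closed curves). The key classical fact, due to Pinkall, is that for any immersed closed curve $\delta$ on $S^2$ of length $\ell$ enclosing area $A$, the Willmore energy of the Hopf torus $\pi^{-1}(\delta)$ equals $\int_\delta(1+\lambda_\delta^2)\,ds$ (up to the additive/normalizing constant $\pi\ell$ or similar), where $\lambda_\delta$ is the geodesic curvature of $\delta$; consequently $\pi^{-1}(\delta)$ is Willmore in $S^3$ if and only if $\delta$ is a critical point of $\int(1+\lambda^2)\,ds$ on $S^2$, i.e.\ a spherical elastic curve with Lagrange multiplier $1$. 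This is precisely the reference [\ref{ulrich}] cited in the paragraph preceding the corollary.

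So the proof reduces to two observations chained together. (i) By Theorem~2, the tangent image $T$ of a force--free strip is a spherical elastic curve with Lagrange multiplier~$1$; (ii) by the Hopf--torus correspondence of [\ref{ulrich}], the Hopf preimage of such a curve --- which is by definition the frame cylinder $F$ --- is a Willmore torus in $S^3$. Combining (i) and (ii) gives the corollary immediately. I would also note, for the reader, why $F$ is genuinely the ``frame cylinder'': lifting the adapted (Frenet-type) frame of $\gamma$ along \eqref{gamma} to the unit quaternions $S^3$, the fiber $S^1$ parametrizes the rotational freedom in the choice of lift, and projecting by $\pi$ kills exactly that freedom, returning the point $T(s)\in S^2$; hence $\pi\circ F = T$ and $F = \pi^{-1}(T)$.

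The main obstacle is not in the logic but in stating the Hopf--torus energy formula with the correct normalization so that ``critical point of $S_{-1}$ restricted to the tangent image'' matches ``Willmore critical point of $\pi^{-1}(T)$'' on the nose; one must be careful that the additive constant and the scaling in $\int(1+\lambda^2)$ are precisely those for which the Euler--Lagrange equation is \eqref{f_3} with $l=1$, i.e.\ $\lambda'^2+\tfrac14\lambda^4+\tfrac12\lambda^2=A$, which is exactly equation~\eqref{15} appearing in the proof of Theorem~2. Since that computation has already been carried out in the excerpt, invoking it together with [\ref{ulrich}] completes the argument, and no further calculation is needed.
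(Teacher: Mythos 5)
Your argument is correct and is exactly the paper's route: the corollary is deduced by combining Theorem 2 (the tangent image $T$ of a force--free strip is a spherical elastic curve with Lagrange multiplier $1$) with Pinkall's Hopf--torus result cited as [\ref{ulrich}], after identifying the frame cylinder with the Hopf preimage of $T$. The paper gives no further proof beyond this chain, so your proposal matches it, with your remarks on the normalization of the energy being a harmless elaboration.
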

We generalize the previous method and reduce the variational problem to spherical curves.
\begin{definition}

For an arclength parametrized spherical curve $T$ we call
 \begin{align}
 P(T):=2\int_{0}^{\tilde{L}}\sqrt{\left\langle T,b_0\right\rangle-\mu}(1+\lambda^2)ds
 \end{align}
 the P-functional.
\end{definition}
 \begin{3.theorem}
For a critical point $\gamma$ of the modified Sadowsky functional the tangent vector $T$ with spherical curvature $\lambda$ is a critical point of the P-functional.
 \end{3.theorem}
 \begin{proof}
 Let $T$$\colon$$[0,\tilde{L}]\mapsto S^2$ be an arclength parametrized spherical curve with curvature
 $\lambda$. For any function $\kappa$$\colon$$[0,\tilde{L}]\mapsto \mathbb{R}^+$, we can define a regular space curve 
 \begin{align}
 \gamma(t)=\int_{0}^{{t}}\frac{1}{\kappa}T ds.
 \end{align}
 $\gamma$ has curvature $\kappa$ and the Sadowsky functional of $\gamma$ is given by 
 \begin{align}\label{sa}
 S(\kappa)=\int_{0}^{\tilde{L}}\kappa (1+\lambda^2)^2 ds .
 \end{align}
 We want to look for critical points of $S$ when $T$ is held fixed (only $\kappa$ varies). We do these variations of $\gamma$
 under two constraints: The length
 \begin{align}
 L=\int_{0}^{\tilde{L}}\frac{1}{\kappa}ds
 \end{align}
 and the end points of $\gamma$
 \begin{align}
 \gamma(\tilde{L})=\int_{0}^{\tilde{L}}\frac{1}{\kappa} T ds
 \end{align}
 will be held fixed. These four scalar constraints allow us to add four Lagrange multipliers to the functional (\ref{sa}), conventionally 
 gathered into a scalar $\mu$ and a vector $b_0\in\mathbb{R}^3$:
 \begin{align}
 P_T(\kappa)=\int_{0}^{\tilde{L}}(\kappa(1+\lambda^2)^2-\frac{\mu}{\kappa}+\frac{\left\langle T,b_0\right\rangle}{\kappa})ds.
 \end{align}
 Varying $\kappa$ yields 
 \begin{align}
 \dot{P}_T=\int_{0}^{\tilde{L}}(\dot{\kappa}((1+\lambda^2)^2+\frac{\mu}{\kappa^2}-\frac{\left\langle T,b_0\right\rangle}{\kappa^2}))ds,
 \end{align}
 so $\kappa$ is critical for $P_T$ iff 
 \begin{align}\label{ka}
 \left\langle T,b_0\right\rangle=\kappa^2(1+\lambda^2)^2+\mu.
 \end{align}
 Computing $\kappa$ from (\ref{ka}) yields
 \begin{align}\label{super}
 \kappa=\frac{\sqrt{\left\langle T,b_0\right\rangle-\mu}}{1+\lambda^2}.
 \end{align}
  Then $P_T$ becomes 
  \begin{align}
 P(T)=2\int_{0}^{\tilde{L}}\sqrt{\left\langle T,b_0\right\rangle-\mu}(1+\lambda^2)ds.
 \end{align}
 This shows that for critical point $\gamma$ of $S_\mu $ 
 the tangent image $T$ is a critical point of the P-functional.
 \end{proof}


\begin{thebibliography}{10}
    \bibitem{hangan}\label{h1}
    Th. Hangan, Elastic strips and differential geometry. Rend. Sem. Mat. Pol. Torino, 63, 2 (2005).
    \bibitem{hang}\label{h2}
    Th. Hangan, C. Murea, Elastic helices, Rev. Roumaine Mat. Pure App., 50 5-6 (2008), 641-645.
    \bibitem{langer singer}\label{ls}
    J. Langer, D. Singer, The total squared curvature of closed curves, J.Differential Geometry 20 (1984) 1-22.
    \bibitem{pinkall}\label{ulrich}
    U. Pinkall, Hopf tori~in~$S^3$.~Invent.~Math.~81.~(1985),~379-386.
    \bibitem{rominger}
    M. Romiger, Diplomarbeit zu elastischen Streifen, TU-Berlin, 2006.
    \bibitem{sadowsky}
    M. Sadowsky, Ein elementarer Beweis für die Existens eines abwickelbaren Möbiusschen Bandes und Zurückführung          des geometrischen Problems auf ein Variationsproblem. Sitzungsbericht Preussisch Akademischer Wissenschaften,          1930.
    \bibitem{möbius}\label{möbius s}
     E.L. Starostin and G.H.M. van der Heijden, Natura Materials 6(8) (2007), 563-567.
     \bibitem{s.h.2}
     E.L. Starostin and G.H.M. van der Heijden, Physical Review Letters 101, 084301 (2008).
     \bibitem{s.h.3}
     E.L. Starostin and G.H.M. van der Heijden, Physical Review E 79, 066602 (2009).
    \bibitem{wunderlich}
     W. Wunderlich,
    Über ein abwickelbares Möbiusband. (German)[J] Monatsh. Math. 66 (1962), 276-289 .
    \end{thebibliography}
\end{document}